\newcommand{\eps}{\varepsilon}
\newcommand{\Ldiv}{L_{\text{div}}\,}
\newcommand{\Mdiv}{M_{\text{div}}}
\newcommand{\tnorm}[2]{|||#1|||_{#2}}
\newcommand{\LiStnorm}[1]{\tnorm{#1}{\text{div}}}
\def\Oh{{\mathcal{O}}}
\begin{document}

\title{First-order system least squares finite-elements for singularly
  perturbed reaction-diffusion equations\thanks{The work of S.M. was
    partially supported by NSERC discovery grants RGPIN-2014-06032 and
    RGPIN-2019-05692.}}

\titlerunning{FOSLS for singularly perturbed reaction-diffusion
  equations}

\author{James H. Adler\inst{1}\orcidID{0000-0002-6603-8840}  \and Scott MacLachlan\inst{2}\orcidID{0000-0002-6364-0684} \and Niall Madden\inst{3}\orcidID{0000-0002-4327-4234}}

\authorrunning{Adler, MacLachlan, and Madden}

\institute{Department of Mathematics, Tufts University
  \email{James.Adler@tufts.edu} \and Department of Mathematics and
  Statistics, Memorial University of Newfoundland
  \email{smaclachlan@mun.ca} \and School of Mathematics, Statistics,
  and Applied Mathematics, National University of Ireland Galway
  \email{Niall.Madden@NUIGalway.ie}}

\maketitle

\begin{abstract}
We propose a new first-order-system least squares (FOSLS) finite-element
discretization for singularly perturbed reaction-diffusion equations.
Solutions to such problems feature 
layer phenomena, and  are ubiquitous in many areas of applied
mathematics and modelling.
There is a long history
of the development of specialized numerical schemes for their accurate
numerical approximation. We follow a well-established practice of
employing \emph{a priori} layer-adapted meshes,
but with a novel finite-element method that yields a symmetric
formulation while also inducing a so-called ``balanced'' norm.
We prove continuity and coercivity of the FOSLS weak form, present
a suitable piecewise uniform mesh, and report on the results of
numerical experiments that demonstrate the accuracy and robustness of
the method.
\end{abstract}

\keywords{first-order system least squares (FOSLS) finite elements
  \and singularly perturbed differential equations \and
  parameter-robust discretizations}

\section{Introduction}
The numerical solution of singularly perturbed differential equations (SPDEs)
is of great interest to numerical analysts,  given the importance of
these equations in computational modelling, and the challenges they
present for classical numerical schemes and the mathematical methods
used to analyse them; see~\cite{RST} for a survey.
In this work, we focus on linear
second-order \emph{reaction-diffusion problems} of the form
\begin{equation}
  \label{eq:rd_eqn}
  -\eps \Delta u + b u = f \text{ on } \Omega:=(0,1)^d
  \qquad u|_{\partial \Omega}=0,
\end{equation}
for $d=1,2,3$, where we assume there exist constants
$0<b_0<b(\vec{x})<b_1$ for every $\vec{x}\in\Omega$.  
Like all SPDEs, \eqref{eq:rd_eqn} is
characterised by a small positive parameter that multiplies the
highest derivative. It is ``singular'' in the sense that the problem
is ill-posed if one formally sets $\eps=0$. As $\eps$ approaches this
limit, the  solution typically exhibits layers: regions of rapid
change, whose length is determined by $\eps$. The over-arching goal
is to devise methods that resolve these layers, and for which the
error (measured in a suitable norm) is independent of $\eps$.
Many classical techniques make the tacit assumption that derivatives
of $u$ are bounded, which does not hold,
uniformly in $\eps$, for solutions to \eqref{eq:rd_eqn}.
Numerous specialised methods, usually based around layer-adapted meshes,
have been developed with the goal of resolving these layers and the
attendant mathematical conundrums. The celebrated piecewise
uniform meshes of Shishkin have been particularly successful in this
regard; and analyses of finite-difference methods for
\eqref{eq:rd_eqn} and its many variants is largely
complete~\cite{MOS12}.

Finite-element methods (FEMs) applied on layer-adapted meshes have also been
successfully applied to \eqref{eq:rd_eqn}, but their analysis is more
problematic. This is highlighted to great effect by Lin and Stynes who
demonstrated that the usual energy norm associated with \eqref{eq:rd_eqn}
is too weak to adequately express the layers present in the
solution~\cite{LiSt12}. They proposed a first-order FEM (see
\S\ref{sec:balanced norms}) for which the
associated norm is sufficiently strong to capture layers; they coined
the term ``balanced norm'' to describe this. 

A flurry of activity on balanced norms was prompted by~\cite{LiSt12},
including the first-order system Petrov-Galerkin (FOSPeG) approach proposed by the
authors~\cite{JAdler_etal_2014d}, and we refer it its introduction for
a survey of the progress up to 2015. Since then, developments have
continued apace. Broadly speaking, studies can be classified as one of
two types.
\begin{enumerate}
  \item Those that give analyses of standard FEMs, but in norms that are not induced by
    the associated bilinear forms; see, e.g., \cite{RuSt19} on sparse
    grid FEMs, and \cite{MeXe16} on $hp$-FEMs.
  \item Those that propose new formulations  for which the associated norm is naturally 
   ``balanced''; see, e.g.,  the discontinuous Petrov-Galerkin method
   of Heuer and Karulik~\cite{HeKa17}.
 \end{enumerate}
 
The present study belongs to the second of these classes: we propose a new FEM
for which the induced norm is balanced. This method is related to our
earlier work~\cite{JAdler_etal_2014d}, but instead uses a weighted least-squares FEM to obtain a symmetric discrete system.  In this first-order system least-squares (FOSLS) approach \cite{CaLa94,CaMa97}, care is taken in choosing the weight, so that the resulting norms are indeed balanced.  

The remainder of the paper is outlined as follows.
Section~\ref{sec:balanced norms} gives a brief discussion on balanced
norms, where the Lin and Stynes and FOSPeG methods are summarized.
In Section~\ref{sec:FOSLS}, we discuss the weighted least-squares
approach and provide the necessary analysis, which applies in one, two
and three dimensions.
In Section~\ref{sec:numerics}, we focus on the particular case of
$d=2$; we present a suitable Shishkin mesh of the problem, and present
numerical results that support our findings.
Some concluding remarks are given in Section~\ref{sec:conclusions}.     


\section{Balanced norms}
\label{sec:balanced norms}
In \cite{LiSt12}, Lin and Stynes propose a first-order system
reformulation of \eqref{eq:rd_eqn}, writing the equivalent
system as
  \begin{equation}\label{eq:L1}%
    \Ldiv \mathcal{U} :=
    \begin{pmatrix}
      \eps^{1/4}\big(\vec{w} - \nabla u \big) \\
      -\eps \nabla \cdot \vec{w} + bu
    \end{pmatrix}
    =
    \begin{pmatrix}
      0 \\
      f
    \end{pmatrix}
    =: \mathcal{F}_{\text{div}},
\end{equation}
for $\mathcal{U}=(u, \vec w)^T$. Rather than forming a least-squares
finite-element discretization as in \cite{CaLa94,CaMa97}, they choose
to close the system in a nonsymmetric manner, defining $\mathcal{V} = (v,\vec z)^T$ and
\[
  \Mdiv\mathcal{V} :=
  \begin{pmatrix}
    \eps^{1/4}\big(\vec{z} - \nabla v \big)\\
    -\eps^{1/2}b^{-1} \nabla \cdot \vec{z} + v
  \end{pmatrix},
\]
then writing the solution of \eqref{eq:rd_eqn} as that of the weak form
\begin{equation}
\label{eq:LSweak}
a_{\text{div}}(\mathcal{U},\mathcal{V}) :=
\langle \Ldiv
\mathcal{U},\Mdiv\mathcal{V}\rangle
= \langle \mathcal{F}_{\text{div}},\Mdiv\mathcal{V}\rangle
\quad \forall \mathcal{V} \in H^1(\Omega)\times H(\text{div}).
\end{equation}
In \cite{LiSt12}, it is shown that $a_{\text{div}}$ is coercive and
continuous with respect to the norm,
\begin{equation}
\label{eq:LiSt-norm}
\LiStnorm{\mathcal{U}}^2:=
b_0 \|u\|_0^2 +\frac{\eps^{1/2}}{2} \|\nabla u\|_0^2  + 
\frac{\eps^{1/2}}{2} \|\vec w\|_0^2 + 
\eps^{3/2}\|\nabla \cdot \vec w\|_0^2,
\end{equation}
which is shown to be a \emph{balanced} norm for the problem, in the
sense that all the components in \eqref{eq:LiSt-norm} have the same order of
magnitude.

In \cite{JAdler_etal_2014d}, the authors augmented the first-order
system approach proposed by Lin and Stynes to include a curl
constraint, in the same style as \cite{CaMa97}, leading to the
first-order system reformulation of \eqref{eq:rd_eqn} as
\begin{equation}\label{eq:L}%
  L\,\mathcal{U}:=
  \begin{pmatrix}
  {\eps}^{1/4}\big(\vec w - \nabla u \big) \\
  -\eps \nabla \cdot \vec w + bu \\
  \eps \nabla \times \vec w
\end{pmatrix}
=
\begin{pmatrix}
\vec{0}\\
f \\
\vec{0}
\end{pmatrix}
=: \hat{\mathcal{F}}.
\end{equation}
Then, writing
\begin{equation}\label{eq:Mk}%
  M_k\mathcal{V} :=
  \begin{pmatrix}
    {\eps}^{1/4}\big(\vec z - \nabla v \big)\\
  -\eps^{1/2} b^{-1}\nabla \cdot \vec z + v\\
  \eps^{k/2} \nabla \times \vec z
\end{pmatrix},
\end{equation}
leads to the weak form
\begin{equation}
\label{eq:balanced weak form}
 a_k^{}(\mathcal{U},\mathcal{V}) := \langle L\mathcal{U},M_k\mathcal{V}\rangle =
\langle\hat{\mathcal{F}},M_k\mathcal{V}\rangle \quad \forall\mathcal{V} \in
\left(H^1(\Omega)\right)^{1+d}.
\end{equation}
Building on the theory of \cite{LiSt12}, this form is shown to be
coercive and continuous with respect to the balanced norm
\begin{equation}
  \tnorm{\mathcal{U}}{k}^2 = b_0 \|u\|_0^2 +\frac{\eps^{1/2}}{2} \|\nabla u\|_0^2  + 
\frac{\eps^{1/2}}{2} \|\vec w\|_0^2 + 
\eps^{3/2}\|\nabla \cdot \vec w\|_0^2 + \eps^{1+k/2}\|\nabla \times
\vec w\|_0^2.
\end{equation}
Furthermore, in \cite{JAdler_etal_2014d}, the authors show that, when
discretized using piecewise bilinear finite elements on a
tensor-product Shishkin mesh, this weak form leads to a {\it
  parameter-robust} discretization, with an error estimate independent
of the perturbation parameter $\eps$.

\section{First-order system least squares finite-element methods}
\label{sec:FOSLS}
While theoretical and numerical results in \cite{JAdler_etal_2014d}
show the effectiveness of the first-order system Petrov-Galerkin approach proposed therein, the
non-symmetric nature of the weak form also has disadvantages.  Primary
among these is that the weak form no longer can be used as an accurate
and reliable error indicator, contrary to the common practice for
FOSLS finite-element approaches \cite{1997BerndtM_ManteuffelT_McCormickS-aa,2012BrezinaM_GarciaJ_ManteuffelT_McCormickS_RugeJ_TangL-aa,CaLa94,CaMa97,2008De-SterckH_ManteuffelT_McCormickS_NoltingJ_RugeJ_TangL-aa}.
Standard techniques to symmetrize the weak form in
\eqref{eq:balanced weak form} fail, however, either sacrificing the
balanced nature of the norm (and, thus, any guarantee of parameter
robustness of the resulting discretization) or coercivity or
continuity of the weak form (destroying standard error estimates).
Here, we propose a FOSLS approach for the problem in
\eqref{eq:rd_eqn}, made possible by considering a weighted norm with
spatially varying weight function.  Weighted least-squares formulations have been used for a wide variety of problems including those with singularities due to the domain \cite{Manteuffel2006,Manteuffel2008}.

To this end, we define the weighted inner product on both scalar and
vector $H^1(\Omega)$ spaces, writing
\[
\langle u,v \rangle_\beta = \int_\Omega \beta(\vec{x}) u(\vec{x})
v(\vec{x})\, d\vec{x},
\]
with the associated norm written as $\|u\|_\beta$.  Slightly
reweighting the first-order system from \eqref{eq:L}, we have
\begin{equation}\label{eq:FOSLS L}%
  \mathcal{L}\,\mathcal{U} :=
  \begin{pmatrix}
    {\eps}^{1/2}\big(\vec{w} - \nabla u \big)\\
    -\eps b^{-1/2}\nabla \cdot \vec{w} + b^{1/2}u \\
    \eps^{k/2} \nabla \times \vec{w}
  \end{pmatrix}
=
\begin{pmatrix}
\vec{0}\\
b^{-1/2}f \\
\vec{0}
\end{pmatrix}
=: \mathcal{F}.
\end{equation}
and pose the weighted FOSLS weak form as
\[
a(\mathcal{U},\mathcal{V}) = \langle
\mathcal{L}\mathcal{U},\mathcal{L}\mathcal{V} \rangle_\beta = \langle
\mathcal{F},\mathcal{L}\mathcal{V}\rangle_\beta  \quad \forall\mathcal{V} \in
\left(H^1(\Omega)\right)^{1+d}.
\]
This form leads to a natural weighted product norm given by
\[
\tnorm{\mathcal{U}}{\beta,k}^2 =
\|u\|_\beta^2 + \eps\|\nabla u\|_\beta^2 + \eps\|\vec{w}\|_\beta^2 +
\eps^2\|\nabla\cdot\vec{w}\|_\beta^2 + \eps^k\|\nabla\times\vec{w}\|_\beta^2.
\]
As shown below, under a reasonable assumption on the weight function,
$\beta$, the FOSLS weak form is coercive and continuous with respect
to this norm.

\begin{theorem}
  Let $\beta(\vec{x})$ be given such that there exists $C>0$ for which
  \[
\nabla\beta \cdot \nabla\beta < \frac{b_0\beta^2(\vec{x})}{\eps(1+C)^2},
  \]
  for every $\vec{x}\in\Omega$, and let $k\in\mathbb{R}$ be given.  Then,
  \begin{align*}
  |a(\mathcal{U},\mathcal{V})| \leq & \left(3+2\max(b_0^{-1},b_1)\right)\tnorm{\mathcal{U}}{\beta,k}\tnorm{\mathcal{V}}{\beta,k}\\
 \min\left(\frac{C\min(1,b_0)}{1+C},b_1^{-1},1\right)\tnorm{\mathcal{U}}{\beta,k}^2 \leq & a(\mathcal{U},\mathcal{U})
  \end{align*}
  for all $\mathcal{U},\mathcal{V}\in \left(H^1(\Omega)\right)^{1+d}$.
\end{theorem}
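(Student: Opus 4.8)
The plan is to prove continuity and coercivity by direct estimation of the weighted bilinear form $a(\mathcal{U},\mathcal{V}) = \langle \mathcal{L}\mathcal{U},\mathcal{L}\mathcal{V}\rangle_\beta$ against the weighted norm $\tnorm{\cdot}{\beta,k}$, component by component. Continuity should be the easier half: expanding $\langle\mathcal{L}\mathcal{U},\mathcal{L}\mathcal{V}\rangle_\beta$ into its three block contributions and applying the weighted Cauchy--Schwarz inequality, each term is bounded by a product of weighted $L^2$-norms of the pieces $\eps^{1/2}(\vec w-\nabla u)$, $-\eps b^{-1/2}\nabla\cdot\vec w + b^{1/2}u$, and $\eps^{k/2}\nabla\times\vec w$. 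The first-order term $\eps^{1/2}(\vec w-\nabla u)$ splits into $\eps^{1/2}\vec w$ and $\eps^{1/2}\nabla u$, whose weighted norms are exactly (square roots of) terms in $\tnorm{\mathcal{U}}{\beta,k}^2$; the second component produces $\eps b^{-1/2}\nabla\cdot\vec w$ and $b^{1/2}u$, which after using the bounds $b_0<b<b_1$ are controlled by $\eps^2\|\nabla\cdot\vec w\|_\beta^2$ and $\|u\|_\beta^2$ up to the constants $\max(b_0^{-1},b_1)$; and the curl term is immediate. Collecting the factors of $3$ (from the three blocks, with the cross terms within the first-order component absorbed) and $2\max(b_0^{-1},b_1)$ (from the second block) gives the stated continuity constant.

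Coercivity is where the hypothesis on $\beta$ is needed. The obstacle is the cross term arising when one expands
\[
a(\mathcal{U},\mathcal{U}) = \eps\|\vec w - \nabla u\|_\beta^2 + \|{-\eps b^{-1/2}\nabla\cdot\vec w + b^{1/2}u}\|_\beta^2 + \eps^k\|\nabla\times\vec w\|_\beta^2,
\]
namely the indefinite $-2\eps\langle\vec w,\nabla u\rangle_\beta$ inside the first block and $-2\eps^{1/2}\langle\nabla\cdot\vec w, b^{1/2} u\rangle_{\beta/b^{1/2}}$-type term inside the second. The standard FOSLS trick is to integrate the genuinely problematic coupling by parts: in $\int_\Omega \beta\,\vec w\cdot\nabla u\,d\vec x$, move the gradient off $u$, producing $-\int_\Omega u\,\nabla\cdot(\beta\vec w)\,d\vec x = -\int_\Omega \beta u\,\nabla\cdot\vec w\,d\vec x - \int_\Omega u\,\nabla\beta\cdot\vec w\,d\vec x$ (the boundary term vanishes since $u|_{\partial\Omega}=0$). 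This makes the $\beta u\,\nabla\cdot\vec w$ contribution cancel against the cross term generated by the second block, leaving only the "weight-gradient defect" $\int_\Omega u\,\nabla\beta\cdot\vec w\,d\vec x$. That residual is estimated by Cauchy--Schwarz and then absorbed: $|\int u\,\nabla\beta\cdot\vec w| \le \|u\,|\nabla\beta|\,\| \|\vec w\| $, and invoking the hypothesis $\nabla\beta\cdot\nabla\beta < b_0\beta^2/(\eps(1+C)^2)$ pointwise bounds $|\nabla\beta| < \frac{\sqrt{b_0}}{\sqrt\eps(1+C)}\beta$, so this term is dominated by $\frac{\sqrt{b_0}}{\sqrt\eps(1+C)}\|u\|_\beta\,\|\vec w\|_\beta \le \frac{1}{2(1+C)}(\|u\|_\beta^2 + \eps\|\vec w\|_\beta^2)$ after using $\sqrt{b_0}\le$ the relevant scaling (more carefully, weight the Young's inequality to match the $b_0\|u\|_\beta^2$ and $\eps\|\vec w\|_\beta^2$ terms the sum-of-squares will provide). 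The factor $1/(1+C)$ is exactly what leaves a surplus of $C/(1+C)$ of the coercivity mass intact.

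After the integration by parts and absorption, one is left bounding $a(\mathcal{U},\mathcal{U})$ below by a positive combination of $\|u\|_\beta^2$, $\eps\|\nabla u\|_\beta^2$, $\eps\|\vec w\|_\beta^2$, $\eps^2\|\nabla\cdot\vec w\|_\beta^2$, and $\eps^k\|\nabla\times\vec w\|_\beta^2$. The term $\eps\|\nabla u\|_\beta^2$ comes for free from the expansion of $\eps\|\vec w-\nabla u\|_\beta^2$ once the cross term is handled (one keeps, say, half of each square when applying Young); the $b^{1/2}u$ and $\eps b^{-1/2}\nabla\cdot\vec w$ squares from the second block give $\|u\|_\beta^2$ and $\eps^2\|\nabla\cdot\vec w\|_\beta^2$ after dividing by $b_1$ (hence the $b_1^{-1}$ in the minimum); and $\eps\|\vec w\|_\beta^2$ is the leftover after the defect term has eaten at most $\frac{1}{1+C}$ of it. Tracking the worst of these constants — $\frac{C\min(1,b_0)}{1+C}$ for the $u$ and $\vec w$ pieces tied together through the absorption, $b_1^{-1}$ for the divergence and reaction pieces, and $1$ for the curl piece — yields the claimed coercivity constant $\min\!\big(\frac{C\min(1,b_0)}{1+C},b_1^{-1},1\big)$. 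The one delicate bookkeeping point, and the step I would be most careful with, is choosing the Young's-inequality split parameters consistently so that the same fraction $C/(1+C)$ survives simultaneously in both the $\|u\|_\beta^2$ and $\eps\|\vec w\|_\beta^2$ slots; everything else is routine.
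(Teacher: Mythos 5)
Your proposal is correct and follows essentially the same route as the paper's proof: weighted Cauchy--Schwarz with the triangle inequality and the bounds on $b$ for continuity, and, for coercivity, integration by parts (using $u|_{\partial\Omega}=0$) to neutralize the divergence--gradient coupling, leaving exactly the $\nabla\beta$ defect term, which is bounded pointwise via the hypothesis on $\beta$ and absorbed by a weighted Young inequality so that the fraction $C/(1+C)$ of the $\|u\|_\beta^2$ and $\eps\|\vec{w}\|_\beta^2$ mass survives, yielding the stated constants. The only differences are organizational (the paper keeps $\eps\|\vec{w}-\nabla u\|_\beta^2$ intact and uses a polarization identity rather than expanding both squares and cancelling the two cross terms, and note the $b^{\pm 1/2}$ factors in the second block cancel exactly, so the cancellation you assert is indeed exact), and your parenthetical correction of the Young split to match $b_0\|u\|_\beta^2$ and $\eps\|\vec{w}\|_\beta^2$ is the right bookkeeping.
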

\begin{proof}
  For the continuity bound, we note that
  \begin{align*}
a(\mathcal{U},\mathcal{V}) = \eps &\langle \vec{w}-\nabla
u,\vec{z}-\nabla v \rangle_\beta\\ & + \langle
-\eps b^{-1/2}\nabla\cdot\vec{w}+b^{1/2}u,-\eps b^{-1/2}\nabla\cdot \vec{z}
+ b^{1/2}v\rangle_\beta \\ & +
\eps^k\langle\nabla\times\vec{w},\nabla\times\vec{z}\rangle_\beta.
\end{align*}
Thus, by the Cauchy-Schwarz and triangle inequalities, we have
\begin{align*}
  |a(\mathcal{U},\mathcal{V})| \leq &
  \eps\left( \|\vec{w}\|_\beta + \|\nabla u\|_\beta\right)\left(
  \|\vec{z}\|_\beta + \|\nabla v\|_\beta\right)\\& +
  \left(\eps b_0^{-1/2}\|\nabla\cdot \vec{w}\|_\beta + b_1^{1/2}\|u\|_\beta\right)
  \left(\eps b_0^{-1/2}\|\nabla\cdot \vec{z}\|_\beta + b_1^{1/2}\|v\|_\beta\right)
  \\&+ \eps^k\|\nabla\times\vec{w}\|_\beta\|\nabla\times\vec{z}\|_\beta
  \\
  \leq & \left(3+2\max(b_0^{-1},b_1)\right)\tnorm{\mathcal{U}}{\beta,k}\tnorm{\mathcal{V}}{\beta,k}.
\end{align*}

For the coercivity bound, we note
\begin{align*}
a(\mathcal{U},\mathcal{U}) =  \eps\|\vec{w}-\nabla u\|_\beta^2 &+
\eps^2\|b^{-1/2}\nabla\cdot\vec{w}\|_\beta^2 +\|b^{1/2}u\|_\beta^2 +
\eps^k\|\nabla\times\vec{w}\|_\beta^2 \\&-
2\eps\langle\nabla\cdot\vec{w},u\rangle_\beta\\
\geq \eps\|\vec{w}-\nabla u\|_\beta^2 &+
\eps^2b_1^{-1}\|\nabla\cdot\vec{w}\|_\beta^2 +b_0\|u\|_\beta^2 +
\eps^k\|\nabla\times\vec{w}\|_\beta^2 \\&- 2\eps\langle\nabla\cdot\vec{w},u\rangle_\beta.
\end{align*}
Now consider
\begin{align*}
- 2\eps\langle\nabla\cdot\vec{w},u\rangle_\beta & = -2\eps\int_\Omega
\left(\nabla\cdot\vec{w}\right)u\beta d\vec{x} \\
& = 2\eps\int_\Omega\vec{w}\cdot\nabla(u\beta)d\vec{x} \\
& = 2\eps\int_\Omega \left(\vec{w}\cdot\nabla u\right)\beta d\vec{x} +
2\eps \int_\Omega\left(\nabla\beta \cdot \vec{w}\right)u d\vec{x}\\
& = 2\eps\langle \vec{w},\nabla u\rangle_\beta + 2\eps \int_\Omega\left(\nabla\beta \cdot \vec{w}\right)u d\vec{x},
\end{align*}
where we use the fact that $u=0$ on the boundary in the integration by parts step.
Note that
\[
\langle \vec{w},\nabla u\rangle_\beta = \frac{1}{4}\|\vec{w}+\nabla
u\|_\beta^2 - \frac{1}{4}\|\vec{w}-\nabla u\|_\beta^2,
\]
and, consequently, that
\[
\eps\|\vec{w}-\nabla u\|_\beta^2 + 2\eps\langle\vec{w},\nabla
u\rangle_\beta = \frac{\eps}{2}\|\vec{w}+\nabla
u\|_\beta^2 + \frac{\eps}{2}\|\vec{w}-\nabla u\|_\beta^2
= \eps\|\vec{w}\|_\beta^2 + \eps\|\nabla u\|_\beta^2.
\]
Thus,
\begin{align*}
a(\mathcal{U},\mathcal{U}) \geq b_0\|u\|_\beta^2 &+ \eps\|\vec{w}\|_\beta^2 + \eps\|\nabla u\|_\beta^2+
\eps^2b_1^{-1}\|\nabla\cdot\vec{w}\|_\beta^2 +
\eps^k\|\nabla\times\vec{w}\|_\beta^2 \\ &+ 2\eps\int_\Omega\left(\nabla\beta \cdot \vec{w}\right)u d\vec{x}.
\end{align*}
Finally, consider
\[
2\eps\left|\int_\Omega\left(\nabla\beta \cdot \vec{w}\right)u d\vec{x}\right| =
2\eps\left|\left\langle\vec{w},\frac{u}{\beta}\nabla\beta\right\rangle_\beta\right| \leq 2\eps\|\vec{w}\|_\beta\left\|\frac{u}{\beta}\nabla\beta\right\|_\beta.
\]
By our assumption on $\beta$,
\[
\left\|\frac{u}{\beta}\nabla\beta\right\|_\beta^2 \leq \frac{b_0}{\eps(1+C)^2}\|u\|_\beta^2,
\]
and, so,
\[
2\eps\left|\int_\Omega\left(\nabla\beta \cdot \vec{w}\right)u
d\vec{x}\right| \leq 2\frac{\eps^{1/2}b_0^{1/2}}{1+C}\|\vec{w}\|_\beta\|u\|_\beta.
\]
This gives
\begin{align*}
a(\mathcal{U},\mathcal{U}) \geq &b_0\|u\|_\beta^2 + \eps\|\vec{w}\|_\beta^2 + \eps\|\nabla u\|_\beta^2+
\eps^2b_1^{-1}\|\nabla\cdot\vec{w}\|_\beta^2 +
\eps^k\|\nabla\times\vec{w}\|_\beta^2\\ & -
2\frac{\eps^{1/2}b_0^{1/2}}{1+C}\|\vec{w}\|_\beta\|u\|_\beta\\
\geq & b_0\left(1-\frac{1}{(1+C)}\right)\|u\|_\beta^2 + \eps\left(1-\frac{1}{(1+C)}\right)\|\vec{w}\|_\beta^2 \\&+ \eps\|\nabla u\|_\beta^2+
\eps^2b_1^{-1}\|\nabla\cdot\vec{w}\|_\beta^2 +
\eps^k\|\nabla\times\vec{w}\|_\beta^2\\
\geq & \min\left(\frac{C\min(1,b_0)}{1+C},b_1^{-1},1\right)\tnorm{\mathcal{U}}{\beta,k}^2.
\end{align*}
\end{proof}

A natural question, in light of this result, is whether a suitable
choice of $\beta(\vec{x})$ exists.  We now
give a concrete construction of one such family of functions,
$\beta(\vec{x})$, for which the assumption above is satisfied.
This family is constructed for the case of $\Omega = [0,1]^d$ with
boundary layers along each boundary adjacent to the origin (i.e., where $x_i = 0$ for some $i$).
The extension to boundary layers along all $2d$ boundary faces is
straightforward from the construction.

\begin{theorem}
Let $C>0$ be given, and define $\gamma = \frac{b_0^{1/2}}{(1+C)\sqrt{d}}$.
Take
\begin{equation}
  \label{eq:def beta}
\beta(\vec{x}) = \left(1+\frac{1}{\sqrt{\eps}}e^{-\gamma x_1/\sqrt{\eps}}\right)\cdots\left(1+\frac{1}{\sqrt{\eps}}e^{-\gamma x_d/\sqrt{\eps}}\right)
\end{equation}
Then,
\[
\nabla\beta \cdot \nabla\beta < \frac{b_0\beta^2(\vec{x})}{\eps(1+C)^2},
  \]
  for every $\vec{x}\in\Omega$.

\end{theorem}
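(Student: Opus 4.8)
The plan is to exploit the product structure of $\beta$ and reduce everything to a one-dimensional estimate on the logarithmic derivative of each factor. Write $\beta(\vec{x}) = \prod_{i=1}^d g_i(x_i)$, where $g_i(t) = 1 + \eps^{-1/2} e^{-\gamma t/\sqrt{\eps}}$. Since $g_i$ depends only on $x_i$, the partial derivatives separate, $\partial_i \beta = g_i'(x_i)\prod_{j\neq i} g_j(x_j) = \big(g_i'(x_i)/g_i(x_i)\big)\,\beta(\vec{x})$, and hence
\[
\nabla\beta\cdot\nabla\beta = \beta^2(\vec{x}) \sum_{i=1}^d \left(\frac{g_i'(x_i)}{g_i(x_i)}\right)^2 .
\]
So it suffices to bound each logarithmic derivative $g_i'/g_i$ uniformly in $x_i$.

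First I would carry out the (entirely elementary) one-dimensional estimate. Differentiating gives $g_i'(t) = -(\gamma/\eps)\, e^{-\gamma t/\sqrt{\eps}}$, so
\[
\left|\frac{g_i'(t)}{g_i(t)}\right| = \frac{\gamma}{\sqrt{\eps}}\cdot\frac{\eps^{-1/2}e^{-\gamma t/\sqrt{\eps}}}{1 + \eps^{-1/2}e^{-\gamma t/\sqrt{\eps}}} < \frac{\gamma}{\sqrt{\eps}},
\]
the strict inequality holding because the numerator $\eps^{-1/2}e^{-\gamma t/\sqrt{\eps}}$ is strictly smaller than the denominator $1 + \eps^{-1/2}e^{-\gamma t/\sqrt{\eps}}$ for every real $t$. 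Inserting this into the identity above yields
\[
\nabla\beta\cdot\nabla\beta < \beta^2(\vec{x})\sum_{i=1}^d \frac{\gamma^2}{\eps} = \frac{d\gamma^2}{\eps}\,\beta^2(\vec{x}).
\]

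Finally I would simply substitute the definition $\gamma = b_0^{1/2}/((1+C)\sqrt{d})$, which gives $d\gamma^2 = b_0/(1+C)^2$, so that $\nabla\beta\cdot\nabla\beta < b_0\beta^2(\vec{x})/(\eps(1+C)^2)$, exactly the claimed bound. I do not anticipate any real obstacle: the proof is a short computation, and the only nontrivial aspect is recognizing \emph{why} $\gamma$ has the stated form --- it is chosen precisely to absorb the factor $\sqrt{d}$ produced by summing $d$ copies of the identical one-dimensional bound, with the extra slack $1/(1+C)$ left over to feed the coercivity constant in the previous theorem. (The extension to layers on all $2d$ faces, replacing each $g_i$ by $1+\eps^{-1/2}(e^{-\gamma x_i/\sqrt{\eps}}+e^{-\gamma(1-x_i)/\sqrt{\eps}})$, is handled identically: the same comparison of numerator and denominator still gives $|g_i'/g_i| < \gamma/\sqrt{\eps}$, so no constants need to change.)
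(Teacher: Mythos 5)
Your proof is correct and follows essentially the same route as the paper: both compute the logarithmic derivative of each one-dimensional factor, bound it by $\gamma/\sqrt{\eps}$, sum the $d$ identical contributions, and substitute the definition of $\gamma$. Your observation that the per-factor bound is strict (numerator strictly less than denominator) even sharpens the paper's final step slightly.
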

\begin{proof}
  A direct calculation shows that
  \[
\frac{\partial\beta}{\partial x_i} = \frac{\frac{-\gamma}{\eps}e^{-\gamma x_i/\sqrt{\eps}}}{\left(1+\frac{1}{\sqrt{\eps}}e^{-\gamma x_i/\sqrt{\eps}}\right)}\beta(\vec{x}).
\]
Consequently,
\[
\nabla\beta\cdot\nabla\beta = \sum_{i=1}^d \left(\frac{\frac{-\gamma}{\eps}e^{-\gamma x_i/\sqrt{\eps}}}{\left(1+\frac{1}{\sqrt{\eps}}e^{-\gamma x_i/\sqrt{\eps}}\right)}\right)^2\beta^2(\vec{x}).
\]
Note, however, that
\[
\left(\frac{\frac{-\gamma}{\eps}e^{-\gamma
    x_i/\sqrt{\eps}}}{\left(1+\frac{1}{\sqrt{\eps}}e^{-\gamma
    x_i/\sqrt{\eps}}\right)}\right)^2 = \frac{\gamma^2}{\eps}\left(\frac{\frac{1}{\sqrt{\eps}}e^{-\gamma
    x_i/\sqrt{\eps}}}{\left(1+\frac{1}{\sqrt{\eps}}e^{-\gamma
    x_i/\sqrt{\eps}}\right)}\right)^2 \leq \frac{\gamma^2}{\eps}.
\]
This gives
\[
\nabla\beta\cdot\nabla\beta \leq \frac{d\gamma^2}{\eps} \beta^2(\vec{x}).
\]
Substituting in the chosen value for $\gamma$ gives the stated result.
\end{proof}

The final question to be resolved is whether
$\beta(\vec{x})$ as given in \eqref{eq:def beta} is a ``good''
choice, in the sense of whether quasi-optimal
approximation in the resulting norm is expected to give a good
approximation to the layer structure in a typical solution. We
consider the case of $d=2$, the unit square.  Following Lemmas 1.1
and 1.2 of \cite{LiMa09a}, we require that
the problem data satisfy the  assumptions of \cite[\S
2.1]{LiMa09a}, specifically that $f,b \in C^{4,\alpha}(\bar{\Omega})$
and that $f$ vanishes at the corners of the domain.  Denoting the four
edges of the domain by $\Gamma_i$, $1\leq i \leq 4$, numbered
clockwise with the edge $y=0$ as $\Gamma_1$, and the four corners of
the domain by $c_i$, $1\leq i \leq 4$, numbered clockwise with the
origin as $c_1$, we have the following result.

\begin{lemma}[{\cite[Lemmas 1.1 and 1.2]{LiMa09a}}]
 \label{lem:decomp}
 The solution $u$ of \eqref{eq:rd_eqn} can be decomposed as
\begin{subequations}
  \label{eq:decomp}
 \begin{equation}
  u = V+W+Z = V + \displaystyle\sum\limits_{i=1}^4 W_i
  + \displaystyle\sum\limits_{i=1}^4 Z_i,
 \end{equation}
 where each $W_i$ is a layer associated with the edge $\Gamma_i$ and each $Z_i$ is a layer
 associated with the corner $c_i$. 
There exists a constant $C$ such that
 \begin{align}
   \label{eq:V}
   \left|\frac{\partial^{m+n}V}{\partial x^m \partial y^n}
     (x,y)\right| &    \leq C(1+\eps^{1-m/2-n/2}),&     
  0\leq m+n \leq 4,&\\
  \label{eq:W1}
   \left|\frac{\partial^{m+n}W_1 }{\partial x^m \partial y^n}
     (x,y)\right|   &\leq C(1+\eps^{1-m/2})\eps^{-n/2} e^{-y\sqrt{b_0/(2\eps)}}, &
   0\leq m+n \leq 3,&\\
  \label{eq:W2}
   \left|\frac{\partial^{m+n}W_2 }{\partial x^m \partial y^n} (x,y)\right| 
&\leq C\eps^{-m/2}(1+\eps^{1-n/2}) e^{-x\sqrt{b_0/(2\eps)}}, &
 0\leq m+n \leq 3,&\\
\label{eq:Z1}
 \left|\frac{\partial^{m+n} Z_1}{\partial x^m \partial y^n }
 (x,y)\right| &\leq C \eps^{-m/2-n/2}e^{-(x+y)\sqrt{b_0/(2\eps)}},
 & 0\leq m+n \leq 3,&
\end{align}
\end{subequations}
with analogous bounds for $W_3$, $W_4$,  $Z_2$, $Z_3$ and $Z_4$.
\end{lemma}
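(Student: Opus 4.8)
The plan is to follow the classical ``solution-decomposition'' methodology for singularly perturbed reaction-diffusion problems on a rectangle: build $u$ from the outside in, namely a smooth outer part $V$, then edge layers $W_i$ correcting the boundary mismatch of $V$, then corner layers $Z_i$ correcting the residual mismatch left by the $W_i$ at the four corners, and finally bound the remainder by a comparison (barrier-function) argument. First I would construct $V$ as a truncated asymptotic expansion in powers of $\eps$, $V = \sum_{j=0}^{2}\eps^{j}v_j$, with $v_0 = f/b$ and each $v_j$ determined by matching like powers of $\eps$ in \eqref{eq:rd_eqn}, i.e.\ $b\,v_j = \Delta v_{j-1}$. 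The hypothesis $f,b\in C^{4,\alpha}(\bar\Omega)$ gives $v_0\in C^{4,\alpha}$, hence $v_1\in C^{2,\alpha}$, etc., and differentiating these closed-form expressions yields the $\eps$-uniform bounds \eqref{eq:V}; one also records that $-\eps\Delta V + bV = f + \Oh(\eps^{3})$, though in general $V\neq 0$ on $\partial\Omega$.

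Next I would introduce the edge layers. For $\Gamma_1=\{y=0\}$, define $W_1$ via a one-dimensional reaction-diffusion problem in the stretched variable $\hat y = y/\sqrt{\eps}$ with $x$ entering only as a parameter, chosen so that $W_1|_{y=0} = -V|_{y=0}$ and $W_1$ decays as $\hat y\to\infty$; freezing or expanding $b$ appropriately produces the factor $e^{-y\sqrt{b_0/(2\eps)}}$, and $x$- versus $y$-differentiation gives \eqref{eq:W1}, with each $y$-derivative costing a factor $\eps^{-1/2}$ from the stretched variable and the $x$-derivatives inheriting only the mild dependence of the smooth data. The analogous constructions along $\Gamma_2,\dots,\Gamma_4$ give \eqref{eq:W2} and its counterparts. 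These $W_i$ overcorrect near the corners (e.g.\ $W_1$ does not vanish on $\Gamma_2$), so the corner layers $Z_i$ are then defined to cancel this residual: near $c_1$ one works in the doubly stretched variables $(x/\sqrt{\eps},\,y/\sqrt{\eps})$ and solves a constant-coefficient quarter-plane problem with boundary data equal to minus the traces of the offending edge layers, yielding decay $e^{-(x+y)\sqrt{b_0/(2\eps)}}$ and a factor $\eps^{-1/2}$ per derivative, hence \eqref{eq:Z1}.

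I expect the corner-layer step to be the main obstacle: making the quarter-plane problem well posed with sharply decaying solution requires the compatibility condition that $f$ (hence $V$, hence the edge data being corrected) vanish at the corners --- precisely the stated hypothesis --- and bounding the corner-layer derivatives uniformly in $\eps$ is the most delicate estimate, since here one cannot reduce to a one-dimensional situation. Once the layer terms are in hand, set $R = u - V - W - Z$; by construction its boundary data and PDE residual are $\Oh(\eps^{3})$, and applying the comparison principle for $-\eps\Delta + b$ against a product barrier such as $C\eps^{3}\bigl(2 - e^{-x\sqrt{b_0/\eps}} - e^{-y\sqrt{b_0/\eps}}\bigr)$ controls $\|R\|_\infty$, with derivative bounds on $R$ following by differentiating its equation (or by interior Schauder estimates combined with the natural layer scaling $\partial_x\sim\eps^{-1/2}$) and absorbed into the constants in \eqref{eq:V}--\eqref{eq:Z1}. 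Since the statement is verbatim Lemmas~1.1--1.2 of \cite{LiMa09a}, I would ultimately cite that reference for the full barrier constructions rather than reproduce them here.
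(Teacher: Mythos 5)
The paper offers no proof of this lemma at all---it is quoted verbatim from \cite{LiMa09a}, as both the bracketed attribution and your own closing sentence acknowledge---so the only ``approach'' to match is the citation, and your outline (regular expansion, stretched-variable edge layers, quarter-plane corner layers using the corner compatibility of $f$, and a barrier-function bound on the remainder) is exactly the standard construction used in that literature. The one point where your sketch oversimplifies is the regular part: with only $f,b\in C^{4,\alpha}(\bar\Omega)$ the naive three-term expansion is not available (your $v_2=\Delta v_1/b$ is merely $C^{0,\alpha}$, so $\Delta v_2$ need not exist classically), which is precisely why \eqref{eq:V} allows the higher derivatives of $V$ to grow like $\eps^{1-m/2-n/2}$ rather than remain bounded, and why the cited proof defines $V$ through an auxiliary problem with a priori estimates rather than by your closed-form recursion.
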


Thus, as a ``stereotypical'' solution of \eqref{eq:rd_eqn} in the case
where boundary layers only form along the edges $x=0$ and $y=0$ of
$[0,1]^2$, we can consider
\[
u(x) = u_0(x) + c_1e^{-x\sqrt{b_0/(2\eps)}} +
c_2e^{-y\sqrt{b_0/(2\eps)}} + c_3e^{-(x+y)\sqrt{b_0/(2\eps)}}.
\]
Next, we check if $\tnorm{\mathcal{U}}{\beta,k}$ is ``balanced'',
not only in the sense of all terms having the same order, but in
addition that
each component in the stereotypical solution above is well-represented
in the norm.  This means the norm can be bounded from above and below by
$\eps$-independent values, so that it is not seen as being
well-approximated by zero in the norm (unless truly vanishingly
small), nor that the norm blows up as $\eps \rightarrow 0$.
For this case, \eqref{eq:def beta} simplifies as
\begin{equation*}
  \beta(x,y) = \beta_1(x)\beta_1(y)
  \quad \text{ where } \quad \beta_1(x) = 1+\frac{1}{\sqrt{\eps}}e^{-\gamma
    x_1/\sqrt{\eps}},
\end{equation*}
and the checks rely on two direct calculations:
\begin{align*}
\int_0^1 \beta_1(x)dx & = 1 +
\frac{1}{\gamma}\left(1-e^{-\gamma/\sqrt{\eps}}\right) \approx 1 +
\frac{1}{\gamma}, \\
\int_0^1 \beta_1(x) \left(e^{-x\sqrt{b_0/(2\eps)}}\right)^2dx & =
\frac{1}{\gamma+\sqrt{2b_0}}\left(1-e^{-\gamma/\sqrt{\eps} -
                                                                \sqrt{2b_0/\eps}}\right) \\
  & \qquad \qquad +
\sqrt{\frac{\eps}{2b_0}}\left(1-e^{-2\sqrt{b_0/(2\eps)}}\right)\\
& \approx \frac{1}{\gamma+\sqrt{2b_0}},
\end{align*}
With this, assuming that $u_0(x)$ is $\Oh(1)$ over
a nontrivial fraction of the domain, we conclude that
\[
\tnorm{(u_0,\nabla u_0)^T}{\beta,k} \approx 1+\frac{1}{\gamma},
\]
because of the separable nature of the calculation.  Thus, the regular
part of the solution is well-represented in the norm.

For the $W_2$ layer term, we write $w_2(x,y) = e^{-x\sqrt{b_0}/2\eps}$
and calculate from the above that
\[
\left\|w_2\right\|_\beta^2 \approx \left(1+\frac{1}{\gamma}\right)\frac{1}{\gamma+\sqrt{2b_0}}.
\]
Noting that all derivatives of this term with respect to $y$ are zero
and that $\partial_x^\ell w_2 = (-\sqrt{b_0/(2\eps)})^\ell w_2$, we
compute
\begin{align*}
\tnorm{(w_2,\nabla w_2)^T}{\beta,k}^2 & = 
\|w_2\|_\beta^2 + \eps\|\nabla w_2\|_\beta^2 + \eps\|\nabla w_2\|_\beta^2 +
                                        \eps^2\|\nabla\cdot \nabla w_2\|_\beta^2 \\
  & \qquad \quad + \eps^k\|\nabla\times\nabla
w_2\|_\beta^2 \\
& = \|w_2\|_\beta^2 + \frac{b_0}{2}\|w_2\|_\beta^2 + \frac{b_0}{2}\|w_2\|_\beta^2 +
\left(\frac{b_0}{2}\right)^2\|w_2\|_\beta^2+0 \\
& \approx \left(1+b_0 + \left(\frac{b_0}{2}\right)^2\right)\left(1+\frac{1}{\gamma}\right)\frac{1}{\gamma+\sqrt{2b_0}}.
\end{align*}
Again, this shows that the $W_2$ layer term is well-represented in the
norm.  Similar calculations show the same to be true for the $W_1$
layer and $Z_1$ corner terms in the stereotypical solution.

\section{Numerical Results}
\label{sec:numerics}

To test the above approach, we consider a two-dimensional problem
with constant $b=1$ posed on the unit square. We construct a problem
whose solution mimics the stereotypical solution discussed above,
with two edge layers and one corner layer.  Specifically, we choose
$f$ so that the
solution is
\[
u(x,y) = \left(\cos\left(\frac{\pi x}{2}\right)- \frac{e^{-x/\sqrt{\eps}} -e^{-1/\sqrt{\eps}}}{1-e^{-1/\sqrt{\eps}}}\right)
  \left(1-y- \frac{e^{-y/\sqrt{\eps}}-e^{-1/\sqrt{\eps}}}{1-e^{-1/\sqrt{\eps}}}\right).
\]
We note that this has somewhat more complex layer behaviour than the
stereotypical solution, but still obeys the bounds of Lemma
\ref{lem:decomp}.  Also, the solution is constructed so as to
obey the homogeneous Dirichlet boundary conditions.  For numerical
stability, we rescale the equations by defining $\vec{w} =
\sqrt{\eps}\nabla u$ and making corresponding changes in weights to
preserve the balanced nature of the norm.  With this, we pick $k$ to
match the powers of $\eps$ in the weighting terms of both
$\|\nabla\cdot\vec{w}\|_\beta^2$ and
$\|\nabla\times\vec{w}\|_\beta^2$ in $\tnorm{\mathcal{U}}{\beta,k}$,
equivalent to taking $k=2$ above.

We discretize the test problem on a tensor-product Shishkin mesh (see,
e.g.,~\cite[\S3]{JAdler_etal_2014d} for more details).  To
do this, we select a transition point, $\tau > 0$, and construct a
one-dimensional mesh with $N/2$ equal-sized elements on each of the intervals
$[0,\tau]$ and  $[\tau,1]$.
The two-dimensional mesh is created as a tensor-product of this mesh
with itself, with rectangular (quadrilateral) elements.  For the
choice of $\tau$, we slightly modify the standard choice from the
literature (see, for example, \cite{JAdler_etal_2014d,LiSt12,LiMa09a})
to account for both the layer functions present in the solution
decomposition and in the definition of $\beta(\vec{x})$ in
\eqref{eq:def beta}.  As such, we
take
\[
\tau = \min\left\{\frac{1}{2}, (p+1)\sqrt{\frac{2\eps}{b_0}}\gamma^{-1}\ln N\right\}
\]
where $p$ is the degree of the polynomial space ($p=1$ for bilinear
elements, $p=2$ for biquadratic, and $p=3$ for bicubic), so that this factor
matches the expected $L^2$ rate of convergence of the approximation, while the terms $\sqrt{{2\eps}/{b_0}}\gamma^{-1}$
decrease appropriately as $\eps$ does, but increase (corresponding to
increasing layer width) with decreases in $b_0$ or $\gamma$.  In the
results that follow, we take $\gamma = 0.5$, implying $C =
\sqrt{2}-1$.  All numerical results were computed using
Firedrake \cite{rathgeber2017firedrake} for the discretization and a
direct solver for the resulting linear systems.

Table \ref{tab:expected} shows the expected reduction rates in errors
with respect to the mesh parameter, $N$, if we were to have standard
estimates of approximation error in the $\beta$-norm on the Shishkin
meshes considered here.  Tables \ref{tab:bilinear}, \ref{tab:biquad} and
\ref{tab:bicubic} show the measured errors (relative to the
manufactured solution) for the bilinear, biquadratic, and bicubic
discretizations, respectively.  Expected behaviour for the bilinear case
is a reduction like $N^{-1}\ln N$ for
$\tnorm{\mathcal{U}^\ast-\mathcal{U}^N}{\beta,2}$ (where
$\mathcal{U}^\ast$ represents the manufactured solution, $u^\ast$ and
its gradient) and like $(N^{-1}\ln N)^2$ for the discrete maximum norm
of the error, $\|u^*-u^N\|_{\ell_{\infty}}$, which is measured at the
nodes of the mesh corresponding to the finite-element degrees of
freedom.  These are both expected to be raised by one power in the
biquadratic case, and a further one power for bicubics.  In Tables \ref{tab:bilinear},
\ref{tab:biquad}, and \ref{tab:bicubic}, we see convergence behaviour comparable to
these rates, with the exception of the results for the discrete
maximum norm in Table \ref{tab:biquad}. These seem to show a
superconvergence-type phenomenon, although we have no
explanation for this observation at present.

\begin{table}[p]
  \centering
  \caption{Expected error reduction rates  on a Shishkin mesh.}
  \label{tab:expected}
  \begin{tabular}{c|cccc}
    & ~$N=64$~ & ~$N=128$~ & ~$N=256$~ & ~$N=512$~\\
    \hline
    $N^{-1}\ln N$  & 0.60 & 0.58 & 0.57 & 0.56\\
    $(N^{-1}\ln N)^2$ & 0.36 & 0.34 & 0.33 & 0.32\\
    $(N^{-1}\ln N)^3$ & 0.22 & 0.20 & 0.19 & 0.18\\
    $(N^{-1}\ln N)^4$ & 0.13 & 0.12 & 0.11 & 0.10
  \end{tabular}
\end{table}

\begin{table}[p]
\centering
\caption{$\beta$-weighted norm and discrete max norm errors for model
  problem with bilinear discretization.}
\label{tab:bilinear}
\begin{tabular}{|c|ccccc|}
\hline
&\multicolumn{5}{|c|}{$\tnorm{\mathcal{U}^\ast-\mathcal{U}^N}{\beta,2}$  (Reduction Rate w.r.t. N)} \\
\hline
$\eps$/$N$&32&64&128&256&512\\
\hline
$10^{-6}$  &  3.086e-01 & 1.921e-01 (0.62) & 1.137e-01 (0.59)&
6.531e-02 (0.57) & 3.680e-02 (0.56)\\
$10^{-8}$  &  3.086e-01 & 1.921e-01 (0.62)& 1.137e-01 (0.59)&
6.532e-02 (0.57) & 3.681e-02 (0.56)\\
$10^{-10}$  &  3.086e-01 & 1.921e-01 (0.62)& 1.137e-01 (0.59)&
6.533e-02 (0.57) & 3.681e-02 (0.56)\\
$10^{-12}$  &  3.086e-01 & 1.921e-01 (0.62)& 1.137e-01 (0.59)&
6.533e-02 (0.57) & 3.681e-02 (0.56)\\
\hline
&\multicolumn{5}{|c|}{$\|u^*-u^N\|_{\ell_{\infty}}$  (Reduction Rate w.r.t. N)} \\
\hline
$\eps$/$N$&32&64&128&256&512\\
\hline
$10^{-6}$  &  6.935e-02 & 1.981e-02 (0.29) & 6.436e-03 (0.32)&
2.051e-03 (0.32) & 6.448e-04 (0.31)\\
$10^{-8}$  &  6.945e-02 & 1.983e-02 (0.29)& 6.444e-03 (0.32)&
2.053e-03 (0.32) & 6.455e-04 (0.31)\\
$10^{-10}$  &  6.946e-02 & 1.984e-02 (0.29)& 6.445e-03 (0.32)&
2.054e-03 (0.32) & 6.456e-04 (0.31)\\
$10^{-12}$  &  6.946e-02 & 1.984e-02 (0.29)& 6.445e-03 (0.32)&
2.054e-03 (0.32) & 6.456e-04 (0.31)\\
\hline
\end{tabular}
\end{table}

\begin{table}[p]
\centering
\caption{$\beta$-weighted norm and discrete max norm errors for model
  problem with biquadratic discretization.}
\label{tab:biquad}
\begin{tabular}{|c|ccccc|}
\hline
&\multicolumn{5}{|c|}{$\tnorm{\mathcal{U}^\ast-\mathcal{U}^N}{\beta,2}$  (Reduction Rate w.r.t. N)} \\
\hline
$\eps$/N&32&64&128&256&512\\
\hline
$10^{-6}$  &  9.307e-02 & 3.854e-02 (0.41)& 1.394e-02 (0.36)&
4.655e-03 (0.33) & 1.484e-03 (0.32)\\
$10^{-8}$  &  9.306e-02 & 3.854e-02 (0.41)& 1.394e-02 (0.36)&
4.656e-03 (0.33) & 1.485e-03 (0.32)\\
$10^{-10}$  &  9.306e-02 & 3.854e-02 (0.41)& 1.394e-02 (0.36)&
4.656e-03 (0.33) & 1.485e-03 (0.32)\\
$10^{-12}$  &  9.306e-02 & 3.854e-02 (0.41)& 1.394e-02 (0.36)&
4.656e-03 (0.33) & 1.485e-03 (0.32)\\
\hline
&\multicolumn{5}{|c|}{$\|u^*-u^N\|_{\ell_{\infty}}$  (Reduction Rate w.r.t. N)} \\
\hline
$\eps$/N&32&64&128&256&512\\
\hline
$10^{-6}$  &  1.512e-02 & 1.817e-03 (0.12)& 2.715e-04 (0.15)&
3.609e-05 (0.13) & 4.133e-06 (0.11)\\
$10^{-8}$  &  1.518e-02 & 1.823e-03 (0.12)& 2.730e-04 (0.15)&
3.622e-05 (0.13) & 4.145e-06 (0.11)\\
$10^{-10}$  &  1.519e-02 & 1.823e-03 (0.12)& 2.733e-04 (0.15)&
3.624e-05 (0.13) & 4.147e-06 (0.11)\\
$10^{-12}$  &  1.519e-02 & 1.823e-03 (0.12)& 2.734e-04 (0.15)&
3.625e-05 (0.13) & 4.147e-06 (0.11)\\
\hline
\end{tabular}
\end{table}

\begin{table}
\centering
\caption{$\beta$-weighted norm and discrete max norm errors for model
  problem with bicubic discretization.}
\label{tab:bicubic}
\begin{tabular}{|c|ccccc|}
\hline
&\multicolumn{5}{|c|}{$\tnorm{\mathcal{U}^\ast-\mathcal{U}^N}{\beta,2}$  (Reduction Rate w.r.t. N)} \\
\hline
$\eps$/N&32&64&128&256&512\\
\hline
$10^{-6}$  &  2.786e-02 & 7.800e-03 (0.28)& 1.748e-03 (0.22)& 3.419e-04 (0.20)& 6.185e-05 (0.18)\\
$10^{-8}$  &  2.785e-02 & 7.800e-03 (0.28)& 1.749e-03 (0.22)& 3.420e-04 (0.20)&  6.187e-05 (0.18)\\
$10^{-10}$  &  2.785e-02 & 7.800e-03 (0.28)& 1.749e-03 (0.22)& 3.420e-04 (0.20)&  6.187e-05 (0.18)\\
$10^{-12}$  &  2.785e-02 & 7.800e-03 (0.28)& 1.749e-03 (0.22)& 3.420e-04 (0.20)&  6.187e-05 (0.18)\\
\hline
&\multicolumn{5}{|c|}{$\|u^*-u^N\|_{\ell_{\infty}}$  (Reduction Rate w.r.t. N)} \\
\hline
$\eps$/N&32&64&128&256&512\\
\hline
$10^{-6}$  &  4.364e-03 & 6.989e-04 (0.16)& 9.807e-05 (0.14)& 1.148e-05 (0.12)&  1.198e-06 (0.10)\\
$10^{-8}$  &  4.370e-03 & 6.993e-04 (0.16)& 9.812e-05 (0.14)& 1.149e-05 (0.12)&  1.199e-06 (0.10)\\
$10^{-10}$  &  4.371e-03 & 6.994e-04 (0.16)& 9.813e-05 (0.14)& 1.149e-05 (0.12)&  1.199e-06 (0.10)\\
$10^{-12}$  &  4.371e-03 & 6.994e-04 (0.16)& 9.813e-05 (0.14)& 1.149e-05 (0.12)&  1.199e-06 (0.10)\\
\hline
\end{tabular}
\end{table}

\section{Conclusions}
\label{sec:conclusions}

In the paper, we propose and analyse a new weighted-norm first-order
system least squares methodology tuned for singularly perturbed
reaction-diffusion equations that lead to boundary layers.  The
analysis includes a standard ellipticity result for the FOSLS
formulation in a weighted norm, and shows that this norm is suitably
weighted to be considered a ``balanced norm'' for the problem.
Numerical results confirm the effectiveness of the method.  Future
work includes
completing the error analysis by proving the necessary interpolation error estimates, with respect to
$\tnorm{ \cdot }{\beta,2}$, investigating the observed
superconvergence properties,  generalizing the theory to
convection-diffusion equations,
 and investigating efficient
linear solvers for the resulting discretizations.

\bibliographystyle{splncs04}
\bibliography{fosls_refs}

\begin{thebibliography}{10}
\providecommand{\url}[1]{\texttt{#1}}
\providecommand{\urlprefix}{URL }
\providecommand{\doi}[1]{https://doi.org/#1}

\bibitem{JAdler_etal_2014d}
Adler, J.H., Mac{L}achlan, S., Madden, N.: A first-order system
  {P}etrov-{G}alerkin discretisation for a reaction-diffusion problem on a
  fitted mesh. IMA J. Numer. Anal.  \textbf{36}(3),  1281--1309 (2016)

\bibitem{1997BerndtM_ManteuffelT_McCormickS-aa}
Berndt, M., Manteuffel, T.A., McCormick, S.F.: Local error estimates and
  adaptive refinement for first-order system least squares ({FOSLS}). Electron.
  Trans. Numer. Anal.  \textbf{6},  35--43 (1997)

\bibitem{2012BrezinaM_GarciaJ_ManteuffelT_McCormickS_RugeJ_TangL-aa}
Brezina, M., Garcia, J., Manteuffel, T., McCormick, S., Ruge, J., Tang, L.:
  {Parallel adaptive mesh refinement for first-order system least squares}.
  Numerical Linear Algebra with Applications  \textbf{19},  343--366 (2012)

\bibitem{CaLa94}
Cai, Z., Lazarov, R., Manteuffel, T., Mc{C}ormick, S.: First-order system least
  squares for second-order partial differential equations: Part {I}. SIAM J.
  Numer. Anal. pp. 1785--1799 (1994)

\bibitem{CaMa97}
Cai, Z., Manteuffel, T., Mc{C}ormick, S.: {First-order system least squares for
  second-order partial differential equations. {II}}. SIAM J. Numer. Anal.
  \textbf{34}(2),  425--454 (1997). \doi{10.1137/S0036142994266066}

\bibitem{2008De-SterckH_ManteuffelT_McCormickS_NoltingJ_RugeJ_TangL-aa}
De~Sterck, H., Manteuffel, T., McCormick, S., Nolting, J., Ruge, J., Tang, L.:
  Efficiency-based {$h$}- and {$hp$}-refinement strategies for finite element
  methods. Numer. Linear Algebra Appl.  \textbf{15}(2-3),  89--114 (2008).
  \doi{10.1002/nla.567}

\bibitem{HeKa17}
Heuer, N., Karkulik, M.: A robust {DPG} method for singularly perturbed
  reaction-diffusion problems. SIAM J. Numer. Anal.  \textbf{55}(3),
  1218--1242 (2017). \doi{10.1137/15M1041304}

\bibitem{Manteuffel2006}
Lee, E., Manteuffel, T.A., Westphal, C.R.: Weighted-norm first-order system
  least squares ({FOSLS}) for problems with corner singularities. SIAM J.
  Numer. Anal.  \textbf{44}(5),  1974--1996 (2006)

\bibitem{Manteuffel2008}
Lee, E., Manteuffel, T.A., Westphal, C.R.: Weighted-norm first-order system
  least-squares ({FOSLS}) for div/curl systems with three dimensional edge
  singularities. SIAM J. Numer. Anal.  \textbf{46}(3),  1619--1639 (2008)

\bibitem{LiSt12}
Lin, R., Stynes, M.: {A balanced finite element method for singularly perturbed
  reaction-diffusion problems}. SIAM J. Numer. Anal.  \textbf{50}(5),
  2729--2743 (2012). \doi{10.1137/110837784}

\bibitem{LiMa09a}
Liu, F., Madden, N., Stynes, M., Zhou, A.: {A two-scale sparse grid method for
  a singularly perturbed reaction--diffusion problem in two dimensions}. IMA J.
  Numer. Anal.  \textbf{29}(4),  986--1007 (2009). \doi{10.1093/imanum/drn048}

\bibitem{MeXe16}
Melenk, J.M., Xenophontos, C.: Robust exponential convergence of {$hp$}-{FEM}
  in balanced norms for singularly perturbed reaction-diffusion equations.
  Calcolo  \textbf{53}(1),  105--132 (2016). \doi{10.1007/s10092-015-0139-y}

\bibitem{MOS12}
Miller, J.J.H., O'Riordan, E., Shishkin, G.I.: Fitted numerical methods for
  singular perturbation problems. World Scientific Publishing Co. Pte. Ltd.,
  Hackensack, NJ, revised edn. (2012). \doi{10.1142/9789814390743}

\bibitem{rathgeber2017firedrake}
Rathgeber, F., Ham, D.A., Mitchell, L., Lange, M., Luporini, F., McRae, A.T.,
  Bercea, G.T., Markall, G.R., Kelly, P.H.: Firedrake: automating the finite
  element method by composing abstractions. ACM Transactions on Mathematical
  Software (TOMS)  \textbf{43}(3), ~24 (2017)

\bibitem{RST}
Roos, H.G., Stynes, M., Tobiska, L.: Robust numerical methods for singularly
  perturbed differential equations, Springer Series in Computational
  Mathematics, vol.~24. Springer-Verlag, Berlin, second edn. (2008)

\bibitem{RuSt19}
Russell, S., Stynes, M.: Balanced-norm error estimates for sparse grid finite
  element methods applied to singularly perturbed reaction-diffusion problems.
  J. Numer. Math.  \textbf{27}(1),  37--55 (2019). \doi{10.1515/jnma-2017-0079}

\end{thebibliography}

\end{document}